\newtheorem{theorem}{Theorem}[section]
\newtheorem{remark}[theorem]{Remark}
\newtheorem{definition}[theorem]{Definition}
\newtheorem{conjecture}[theorem]{Conjecture}
\newtheorem{proposition}[theorem]{Proposition}
\newtheorem{example}[theorem]{Example}
\newcommand{\EE}{\mathbb{E}}
\newcommand{\R}{\mathbb{R}}
\newcommand{\Ent}{\operatorname{Ent}}
\newcommand{\cP}{\mathcal{P}}
\newcommand{\cX}{\mathcal{X}}
\title{Subadditivity of the log-Sobolev constant on convolutions}
\author{Thomas~A.~Courtade~and~Edric Wang\\University of California, Berkeley}
\date{\today}
\begin{document}

\maketitle

\begin{abstract}
    We present a general subadditivity inequality for log-Sobolev constants of convolution measures.
    As a corollary, we show that the log-Sobolev constant is monotone along the sequence of standardized convolutions in the central limit theorem.
\end{abstract}

\section{Introduction}

\subsection{Background}

Let $\cP(\R^d)$ be the set of Borel probability measures on $\R^d$.
\begin{definition}
A measure $\mu\in\cP(\R^d)$ satisfies the \emph{logarithmic Sobolev} (log-Sobolev) \emph{inequality} with constant $C$  when
\begin{equation}
	\Ent_\mu(f^2) := \int f^2\log f^2\,d\mu - \int f^2\,d\mu \log \left( \int f^2\,d\mu \right) \leq 2C \int|\nabla f|^2\,d\mu
	\label{eq:lsi}
\end{equation}
for all sufficiently smooth test functions $f:\R^d\to\R$.
The smallest possible constant in the inequality is called the log-Sobolev constant and is denoted $C_{LS}(\mu)$.
If no such inequality holds, we define $C_{LS}(\mu)=\infty$.
\end{definition}

The log-Sobolev inequality is satisfied by a large class of measures.
For instance, a classical result of Bakry and \'Emery \cite{bakry2006diffusions} states that if $d\mu(x)=e^{-V(x)}\,dx$ for some $\alpha$-strongly convex potential  $V:\R^d\to\R$, then $C_{LS}(\mu)\leq1/\alpha$.
In particular, the Gaussian distributions with  largest eigenvalue of the covariance matrix equal to $\alpha$ satisfy this condition, and in this case we have $C_{LS}(\mu)=1/\alpha$ (this is Gross' original logarithmic Sobolev inequality \cite{Gross1975LOGARITHMICSI} after a change of variables).
In general, the value of the constant $C_{LS}(\mu)$ is not precisely known, but may be estimated (for example \cite[Theorem 1.1]{AIDA1994448}).
Other examples include Gaussian convolutions and mixtures of two distributions (see \cite[Section 4]{Chen2021DimensionfreeLI} for details).

Log-Sobolev constants play an important role in quantifying measure concentration phenomena.
For example, the tensorisation property \cite[Corollary 5.7]{ledoux2001concentration} implies dimension-free concentration inequalities for the product measures $\mu^n,n\geq1$ that depend only on $C_{LS}(\mu)$:
\begin{equation*}
    \Ent_{\mu^n}(f^2) \leq C_{LS}(\mu)\|f\|_\mathrm{Lip}^2
\end{equation*}
for all $f:\R^{nd}\to\R$.
Furthermore, log-Sobolev inequalities provide quantitative tail bounds for Lipschitz functions.
Herbst's argument (see, for example, \cite[Theorem 5.3]{ledoux2001concentration}) shows that the log-Sobolev inequality gives sub-Gaussian tails;
\begin{equation}
    \mu^n\left( f \geq \int f\,d\mu^n + t \right) \leq \exp\left( -\frac{t^2}{2C_{LS}(\mu)} \right)
    \label{eq:herbst}
\end{equation}
for all 1-Lipschitz functions $f:\R^{nd}\to\R$.
(All such functions are $\mu^n$-integrable so the integral on the left hand side is well-defined.)
Log-Sobolev inequalities also play an important role in Markov processes: the constant $C_{LS}(\mu)$ controls the rate of convergence of Langevin dynamics with stationary distribution $\mu$ under relative entropy \cite{bakry2014analysis}.
Analogous results hold in discrete time and are widely used in the analysis of sampling algorithms \cite{chewi2023log}.

\subsection{Main results}
\label{subsec:outline}

The main result of this paper is the following subadditivity estimate enjoyed by log-Sobolev constants:
\begin{theorem}
	\label{thm:conv}
	Let $\mu_1,\ldots,\mu_n \in \cP(\R^d)$.
	For a subset $S \subset [n]$, let $\mu_S$ be the convolution of $(\mu_i;i\in S)$.
	Then
	\begin{equation}
		C_{LS}(\mu_{[n]}) \leq \sum_{S\subset[n]} r_S \, C_{LS}(\mu_S)
        \label{eq:conv}
	\end{equation}
	for any real non-negative  coefficients  $(r_S)_{S\subset[n]}$ satisfying  $\sum_{S\ni i} r_S \geq 1$,  $\forall i\in[n]$.
\end{theorem}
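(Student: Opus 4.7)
The plan is to lift the inequality to the product measure $\mu := \mu_1 \otimes \cdots \otimes \mu_n$ on $(\R^d)^n$ and invoke a fractional subadditivity of $\Ent$ to decompose the lifted entropy according to the cover $(r_S)$. For smooth $f:\R^d\to\R$, set $F(x_1,\dots,x_n) := f(x_1+\cdots+x_n)$; since the summation map pushes $\mu$ forward to $\mu_{[n]}$, one has $\Ent_{\mu_{[n]}}(f^2) = \Ent_\mu(F^2)$. The essential structural observation is that $F$ depends on the block $(x_i)_{i\in S}$ only through the scalar sum $\sum_{i\in S} x_i$, whose marginal law under $\bigotimes_{i\in S}\mu_i$ is exactly $\mu_S$ by definition of convolution. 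This mirrors the elementary proof of $C_{LS}(\mu * \nu) \leq C_{LS}(\mu) + C_{LS}(\nu)$, with a fractional Shearer/Han-type decomposition taking the place of the two-variable tensorization of entropy.

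Given this, for each $S$ and each realization $c := \sum_{j\notin S} x_j$, the log-Sobolev inequality for $\mu_S$ applied to $y\mapsto f(y+c)$ gives
\[
\Ent_{\bigotimes_{i\in S}\mu_i}\!\left(F^2(\cdot, X_{[n]\setminus S})\right) \leq 2 C_{LS}(\mu_S) \int |\nabla f(y+c)|^2\,d\mu_S(y).
\]
Averaging over $X_{[n]\setminus S}$ (whose convolution law is $\mu_{[n]\setminus S}$, and $\mu_S*\mu_{[n]\setminus S}=\mu_{[n]}$) collapses the right-hand side to $2C_{LS}(\mu_S)\int|\nabla f|^2\,d\mu_{[n]}$. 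To combine these per-$S$ bounds I would use the fractional subadditivity of the entropy functional on product measures: for any nonnegative $g$ and $(r_S)$ with $\sum_{S\ni i} r_S \geq 1$,
\[
\Ent_\mu(g) \leq \sum_S r_S\,\EE_{X_{[n]\setminus S}}\!\left[\Ent_{\bigotimes_{i\in S}\mu_i}(g(\cdot, X_{[n]\setminus S}))\right].
\]
This is the Shearer/Han-style fractional extension of the standard tensorization of $\Ent$ (recovered when $r_{\{i\}}=1$), and can be derived by induction from the two-variable tensorization identity $\Ent_{\alpha\otimes\beta}(g) = \EE_\beta[\Ent_\alpha(g)] + \Ent_\beta(\EE_\alpha g)$ together with convexity of $\Ent$. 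Applied to $g=F^2$ and combined with the per-$S$ bound, it produces $\Ent_{\mu_{[n]}}(f^2)\leq 2\big(\sum_S r_S C_{LS}(\mu_S)\big)\int|\nabla f|^2\,d\mu_{[n]}$, which is the claim.

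The main obstacle is establishing (or citing a clean reference for) the fractional entropy-subadditivity in the stated form; everything else is a short lift-apply-integrate calculation parallel to the two-term case. A natural contingency, in case a direct citation proves elusive, is to derive the needed inequality from the rearrangement $\EE_{X_{S^c}}[\Ent_{\bigotimes_{i\in S}\mu_i}(g)] = \Ent_\mu(g) - \Ent_{\bigotimes_{j\in S^c}\mu_j}(g_{S^c})$, which reduces the statement to a Madiman--Tetali-style fractional superadditivity of marginal entropies provable by iterating the $n=2$ case.
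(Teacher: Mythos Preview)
Your proposal is correct and follows essentially the same route as the paper: lift $f$ to the product measure via $F(x_1,\dots,x_n)=f(\sum x_i)$, apply a Shearer-type fractional entropy inequality, and then bound each conditional entropy by the log-Sobolev inequality for the corresponding convolution $\mu_S$. The only cosmetic difference is that you package ``Shearer $+$ the decomposition $\Ent_\mu(g)=\EE[\Ent(g\mid X_{S^c})]+\Ent(\EE[g\mid X_{S^c}])$'' into a single fractional-subadditivity lemma (your contingency plan is exactly how the paper proves it), whereas the paper carries out those two steps explicitly and then makes the change of variables $c_S=r_{\bar S}/(\sum_T r_T-1)$ at the end.
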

Note that taking $n=2$, $r_\emptyset=r_{\{1,2\}}=0$ and $r_{\{1\}}=r_{\{2\}}=1$ recovers the following classical subadditivity result.
\begin{proposition}
    \label{prop:conv-classical}
    For ${\mu_1},{\mu_2}\in\cP(\R^d)$ satisfying a log-Sobolev inequality,
    \begin{equation*}
        C_{LS}({\mu_1}*{\mu_2}) \leq C_{LS}({\mu_1}) + C_{LS}({\mu_2}).
    \end{equation*}
\end{proposition}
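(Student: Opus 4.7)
The plan is to lift a test function on $\R^d$ to a function on $\R^{2d}$ via the addition map, and then apply the tensorisation of the log-Sobolev inequality on the product measure $\mu_1\otimes\mu_2$.

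Concretely, given a smooth $f:\R^d\to\R$, define $g:\R^{2d}\to\R$ by $g(x,y)=f(x+y)$. If $(X,Y)\sim\mu_1\otimes\mu_2$, then $X+Y\sim\mu_1*\mu_2$, so the law of $g(X,Y)$ equals the law of $f(X+Y)$. This immediately yields
\begin{equation*}
    \int g^2\,d(\mu_1\otimes\mu_2)=\int f^2\,d(\mu_1*\mu_2),\quad \Ent_{\mu_1\otimes\mu_2}(g^2)=\Ent_{\mu_1*\mu_2}(f^2).
\end{equation*}

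Next, I would invoke the sharp tensorisation of LSI (the standard tensorisation argument gives it per coordinate): for any smooth $g:\R^{2d}\to\R$,
\begin{equation*}
    \Ent_{\mu_1\otimes\mu_2}(g^2) \leq 2 C_{LS}(\mu_1)\int|\nabla_x g|^2\,d(\mu_1\otimes\mu_2) + 2 C_{LS}(\mu_2)\int|\nabla_y g|^2\,d(\mu_1\otimes\mu_2).
\end{equation*}
Since $g(x,y)=f(x+y)$ satisfies $\nabla_x g(x,y) = \nabla_y g(x,y) = (\nabla f)(x+y)$, both Dirichlet integrals on the right hand side equal $\int|\nabla f|^2\,d(\mu_1*\mu_2)$. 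Combining with the identity for the entropies yields
\begin{equation*}
    \Ent_{\mu_1*\mu_2}(f^2) \leq 2\bigl(C_{LS}(\mu_1)+C_{LS}(\mu_2)\bigr)\int|\nabla f|^2\,d(\mu_1*\mu_2),
\end{equation*}
which is the desired inequality.

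There is no real obstacle here; the only point worth double-checking is the sharp form of tensorisation that tracks the per-coordinate constants (rather than just the maximum), which is standard and follows from applying the one-dimensional LSI conditionally and using the chain rule for entropy. Once that is in place, the proof reduces to observing that the two partial gradients of $g=f\circ(+)$ coincide, which is precisely what produces the sum $C_{LS}(\mu_1)+C_{LS}(\mu_2)$ on the right. This same lifting strategy should generalise to the full subadditivity estimate of Theorem~\ref{thm:conv} by choosing a redundant decomposition of the sum according to the coefficients $(r_S)$.
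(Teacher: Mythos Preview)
Your proof is correct and is essentially the same as the paper's: the paper obtains this proposition as the $n=2$ case of Theorem~\ref{thm:conv}, and unwinding that proof with $r_{\{1\}}=r_{\{2\}}=1$ reduces Shearer's inequality to the subadditivity of entropy underlying tensorisation, followed by the conditional application of LSI per coordinate---exactly your argument. Your closing remark is also on point: the generalisation to Theorem~\ref{thm:conv} replaces tensorisation by Shearer's inequality with fractional covers.
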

As a consequence of Theorem \ref{thm:conv}, we have the following monotonicity result for the log-Sobolev constant on the sequence of convolutions in the central limit theorem:
\begin{theorem}
    \label{thm:monotone}
    Let $\mu\in\cP(\R^d)$.
    Draw $X_1,X_2,\ldots$ i.i.d.\@ from $\mu$ and denote by $\nu_n$ the distribution of the scaled sum $S_n=\frac1{\sqrt{n}}\sum_{i=1}^nX_i$.
    The sequence $(C_{LS}(\nu_n))_{n\geq1}$ is weakly decreasing in $n$.
\end{theorem}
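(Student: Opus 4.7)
The plan is to apply Theorem \ref{thm:conv} in a ``leave-one-out'' fashion to bound $C_{LS}(\nu_{n+1})$ in terms of $C_{LS}(\nu_n)$. Write $\nu_{n+1}$ as the $(n{+}1)$-fold convolution of the pushforward measure $\tilde\mu_{n+1}$ of $\mu$ under $x\mapsto x/\sqrt{n+1}$, and take $\mu_1=\cdots=\mu_{n+1}=\tilde\mu_{n+1}$ in the hypothesis of Theorem \ref{thm:conv}.

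The key observation is that for any $S\subset[n+1]$ with $|S|=n$, the convolution $\mu_S$ of $n$ copies of $\tilde\mu_{n+1}$ coincides with the law of $\sqrt{n/(n+1)}\,S_n$, i.e.\ it is the pushforward of $\nu_n$ under $x\mapsto\sqrt{n/(n+1)}\,x$. Since log-Sobolev constants scale as $C_{LS}(a_*\nu)=a^2 C_{LS}(\nu)$ under dilations (a one-line change-of-variables computation using $h(x)=g(ax)$ in \eqref{eq:lsi}), we obtain
\begin{equation*}
C_{LS}(\mu_S) \;=\; \frac{n}{n+1}\,C_{LS}(\nu_n) \qquad \text{for every $S\subset[n+1]$ with $|S|=n$.}
\end{equation*}

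Now I would choose the coefficients $r_S=1/n$ if $|S|=n$ and $r_S=0$ otherwise. For each $i\in[n+1]$, there are exactly $n$ subsets of size $n$ containing $i$, so $\sum_{S\ni i} r_S = 1$ and the hypothesis of Theorem \ref{thm:conv} holds. Since there are $\binom{n+1}{n}=n+1$ subsets of size $n$, Theorem \ref{thm:conv} yields
\begin{equation*}
C_{LS}(\nu_{n+1}) \;\leq\; \sum_{|S|=n} \frac{1}{n}\cdot\frac{n}{n+1}\,C_{LS}(\nu_n) \;=\; (n+1)\cdot\frac{1}{n+1}\,C_{LS}(\nu_n) \;=\; C_{LS}(\nu_n),
\end{equation*}
which is the desired monotonicity. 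There is no real obstacle here: Theorem \ref{thm:conv} is tailor-made for this leave-one-out argument, and the only nontrivial ingredient beyond its invocation is the dilation scaling identity for $C_{LS}$, which follows immediately from the definition \eqref{eq:lsi}.
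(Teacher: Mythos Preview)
Your proof is correct and follows essentially the same leave-one-out application of Theorem~\ref{thm:conv} as the paper: both choose the coefficients $r_S$ to be uniform on subsets of co-size one and invoke the dilation identity (Proposition~\ref{prop:scaling}). The only cosmetic difference is that you pre-scale the measures so that $\mu_{[n+1]}=\nu_{n+1}$ directly, whereas the paper applies Theorem~\ref{thm:conv} to $n$ unscaled copies of $\mu$ to obtain $C_{LS}(\mu_{[n]})\leq\frac{n}{n-1}C_{LS}(\mu_{[n-1]})$ and then rescales at the end.
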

\begin{remark}
    By Proposition \ref{prop:conv-classical} and Fekete's lemma, $\lim_{n\to\infty}C_{LS}(\nu_n)$ exists.
    Theorem \ref{thm:monotone} shows that this limit is approached monotonically.
\end{remark}

We note here that Theorem \ref{thm:conv} can be extended beyond the Euclidean setting along the same lines as \cite[Section 2.1.2]{courtade_bounds_2020}.

\subsection{Discussion and related work}

First we discuss the application of Theorem~\ref{thm:conv} to deriving log-Sobolev constants for mixture distributions.
We will need the following scaling property of the log-Sobolev constant:
\begin{proposition}
    \label{prop:scaling}
    If $Y\sim\mu$ and $\alpha Y+\beta\sim\mu_{\alpha,\beta}$ for $\alpha,\beta\in\R$, then $C_{LS}(\mu_{\alpha,\beta})=\alpha^2C_{LS}(\mu)$.
\end{proposition}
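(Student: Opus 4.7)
The plan is a direct change of variables. Given a sufficiently smooth test function $f:\R^d\to\R$, I will define $g(y) := f(\alpha y + \beta)$ and relate each term of the LSI for $\mu_{\alpha,\beta}$ applied to $f$ with the corresponding term of the LSI for $\mu$ applied to $g$. Concretely, since $\alpha Y + \beta \sim \mu_{\alpha,\beta}$ when $Y \sim \mu$, the identity $\int h(\alpha y + \beta)\,d\mu(y) = \int h\,d\mu_{\alpha,\beta}$ (for any $\mu_{\alpha,\beta}$-integrable $h$) immediately yields $\int g^2\,d\mu = \int f^2\,d\mu_{\alpha,\beta}$ and $\Ent_\mu(g^2) = \Ent_{\mu_{\alpha,\beta}}(f^2)$. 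On the gradient side, the chain rule gives $\nabla g(y) = \alpha\,(\nabla f)(\alpha y + \beta)$, so $\int |\nabla g|^2\,d\mu = \alpha^2 \int |\nabla f|^2\,d\mu_{\alpha,\beta}$.

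Plugging $g$ into the log-Sobolev inequality \eqref{eq:lsi} for $\mu$ with constant $C_{LS}(\mu)$ and substituting these identities yields
\begin{equation*}
    \Ent_{\mu_{\alpha,\beta}}(f^2) \leq 2\,\alpha^2 C_{LS}(\mu) \int |\nabla f|^2\,d\mu_{\alpha,\beta},
\end{equation*}
for every such $f$, and hence $C_{LS}(\mu_{\alpha,\beta}) \leq \alpha^2 C_{LS}(\mu)$. The reverse inequality is handled symmetrically: assuming $\alpha \neq 0$, the map $y \mapsto \alpha y + \beta$ is a bijection of $\R^d$, so every smooth test function for $\mu$ arises as $g(y) = f(\alpha y + \beta)$ for $f(x) = g((x-\beta)/\alpha)$; running the same computation in reverse yields $C_{LS}(\mu) \leq \alpha^{-2} C_{LS}(\mu_{\alpha,\beta})$.

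The only edge case is $\alpha = 0$, where $\mu_{\alpha,\beta} = \delta_\beta$. Then $\Ent_{\mu_{\alpha,\beta}}(f^2) = 0$ identically, so both sides of the claimed equality vanish. There is no substantive obstacle here; the proposition is essentially the statement that both the entropy functional and the Dirichlet form transform in a compatible way under affine reparametrization, with the factor $\alpha^2$ arising purely from the Jacobian of the gradient.
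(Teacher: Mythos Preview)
Your argument is correct and is the standard change-of-variables computation; the paper states this proposition without proof, treating it as a well-known fact, so there is nothing to compare against. One tiny caveat: in the case $\alpha=0$ with $C_{LS}(\mu)=\infty$ you are implicitly using the convention $0\cdot\infty=0$, which is harmless here since $C_{LS}(\delta_\beta)=0$ regardless.
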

The following application is an analogue of \cite[Example 2.2]{courtade_bounds_2020}.
\begin{example}
	By convolving with a Gaussian of small variance, we can get regularity results for $\nu_n$ even when $\mu_1$ is not known to satisfy a log-Sobolev inequality.
	Let $\mu_1=\dots=\mu_n$ and let $\mu_{n+1}=\gamma_{\delta^2}$ where $\gamma_{\delta^2}$ denotes the $N(0,\delta^2I)$ distribution.
    Now let
    \begin{equation*}
        r_S=
        \begin{cases}
            1 & S=\{i,n+1\} \text{ for } i\in[n] \\
            0 &\text{ otherwise.}
        \end{cases}
    \end{equation*}
	Applying Theorem~\ref{thm:conv} and Proposition~\ref{prop:scaling} gives
    \[ C_{LS}(\nu_n\ast\gamma_{\delta^2/n}) \leq C_{LS}(\nu_1\ast\gamma_{\delta^2}) \]
    in the notation of Theorem~\ref{thm:monotone}.
	Note that the right hand side does not depend on $n$.
	So as $n$ increases, the amount of Gaussian regularisation needed to bring the log-Sobolev constant below a fixed number decreases.
\end{example}
    
To see how this improves upon existing estimates for the log-Sobolev constant of a mixture distribution \cite{Chen2021DimensionfreeLI}, let $\nu_1$ be the Rademacher distribution.
Any distribution with disconnected support (including $\nu_n$) does not have a finite log-Sobolev constant because there exists a smooth function that has zero gradient on the support of $\nu_1$ but is not constant.
However, by convolving with a Gaussian distribution we get
\[ C_{LS}(\nu_n\ast\gamma_{\delta^2/n}) = C_{LS}(\nu_1\ast\gamma_{\delta^2}) \leq 6(4+\delta^2)e^{4/\delta^2} \]
where the second inequality follows from \cite[Corollary 1]{Chen2021DimensionfreeLI}.
Applying the same result to the term $C_{LS}(\nu_n\ast\gamma_{\delta^2/n})$ directly gives
\[ C_{LS}(\nu_n\ast\gamma_{\delta^2/n}) \leq 6(4n+\delta^2/n)e^{4n^3/\delta^2} \] which has suboptimal asymptotic dependence on $n$.

Next we discuss the application of Theorems~\ref{thm:conv} and \ref{thm:monotone} to convergence in the central limit theorem.
Continue with the notation of Theorem~\ref{thm:monotone} and let $\mu\in\cP(\R^d)$ be an isotropic distribution; that is
\begin{equation*}
    \int x\,d\mu(x) = 0 \quad\text{and}\quad \int xx^T\,d\mu(x) = I_d.
\end{equation*}
The central limit theorem (CLT) states that $\nu_n$ converges in law to the standard Gaussian distribution: 
\begin{equation*}
    \gamma(dx)=(2\pi)^{-d/2}\exp(-|x|^2/2)\,dx.
\end{equation*}
Recall that $C_{LS}(\gamma)=1$ \cite{Gross1975LOGARITHMICSI}.
Also note that $C_{LS}(\mu) \geq 1$ for any isotropic distribution $\mu$; this follows from taking $f(x)=1+\epsilon x$ for small $\epsilon$ in the log-Sobolev inequality.
We can therefore view the difference $(C_{LS}(\nu_n)-1)$ as a measure of distance to Gaussianity.
(In fact this interpretation can be made quantitative, as explained below.)
This raises the question of whether $\lim_{n\to\infty}C_{LS}(\nu_n)=1$, assuming $C_{LS}(\mu)<\infty$.
For the Poincar\'e constant $C_P(\nu_n)$ (which comes from a linearisation of the log-Sobolev inequality), this result is known to be true \cite{Johnson2002ConvergenceOT}.
Thus, it is natural to propose the following.  
\begin{conjecture}
If $\mu\in\cP(\R^d)$ is isotropic and $C_{LS}(\mu)<\infty$, then $$\lim_{n\to\infty}C_{LS}(\nu_n) = 1.$$
\end{conjecture}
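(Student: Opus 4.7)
The plan is to combine the monotonicity already furnished by Theorem~\ref{thm:monotone} with two further ingredients: (i) the entropic central limit theorem, and (ii) a quantitative stability estimate for the log-Sobolev inequality near the standard Gaussian. By Theorem~\ref{thm:monotone} the limit $c_\infty := \lim_{n\to\infty} C_{LS}(\nu_n)$ exists, and by isotropy $c_\infty \geq 1$; the content of the conjecture is the reverse inequality $c_\infty \leq 1$.

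For (i), the hypothesis $C_{LS}(\mu)<\infty$ implies that both the relative entropy $D(\mu\|\gamma)$ and the relative Fisher information $I(\mu\|\gamma)$ are finite. The classical entropic CLT (Barron; Artstein--Ball--Barthe--Naor) then gives the monotone convergences $D(\nu_n\|\gamma)\to 0$ and $I(\nu_n\|\gamma)\to 0$, with $O(1/n)$ rates under mild additional assumptions, so $\nu_n\to\gamma$ in a strong functional-analytic sense.

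Step (ii) is the main obstacle. The target is a stability estimate of the form $C_{LS}(\nu) - 1 \leq \Phi\bigl(I(\nu\|\gamma),\, D(\nu\|\gamma)\bigr)$, with $\Phi\to 0$ as its arguments vanish, to be applied to $\nu = \nu_n$. Three natural routes I would pursue: (a) a transport-map approach in the spirit of Caffarelli's contraction theorem, showing that the Brenier map from $\gamma$ to $\nu_n$ has Lipschitz constant tending to $1$; (b) a F\"ollmer drift / stochastic-localization analysis, bounding the log-Sobolev deficit of $\nu_n$ by the size of the entropic drift from $\gamma$ to $\nu_n$, which is controlled by $I(\nu_n\|\gamma)$; or (c) a two-scale semigroup argument exploiting the estimate $C_{LS}(T_t\nu) \leq e^{-2t} C_{LS}(\nu) + (1-e^{-2t})$ for the Ornstein--Uhlenbeck semigroup $T_t$ (which already follows from Propositions~\ref{prop:conv-classical} and~\ref{prop:scaling}), combined with a careful control of what is lost when one deconvolves a small Gaussian.

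The reason step (ii) is genuinely hard is that $C_{LS}$ is sensitive both to tail behaviour and to small non-log-concave pockets in the density, so global proximity of $\nu_n$ to $\gamma$ in relative entropy or Fisher information does not by itself force $C_{LS}(\nu_n)\to 1$; notably, the analogous Poincar\'e theorem proved by Johnson already required delicate spectral arguments, and Poincar\'e is strictly weaker than log-Sobolev. A complementary angle, which I would pursue in parallel, is to exploit Theorem~\ref{thm:conv} combinatorially by writing $\nu_n$ as a scaled convolution of $\lfloor n/m\rfloor$ blocks each distributed as $\nu_m$: this in isolation only recovers the monotonicity of Theorem~\ref{thm:monotone}, but any strengthening of Proposition~\ref{prop:conv-classical} that becomes strict when one factor is close to Gaussian would, when combined with the blockwise entropic CLT applied to the block $\nu_m$ with $m = m(n)\to\infty$ and $m/n\to 0$, close the gap.
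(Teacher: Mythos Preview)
The statement you are attempting to prove is labeled as a \emph{Conjecture} in the paper and is explicitly left open there; the authors remark only that a conditional result holds under extra regularity assumptions on near-extremizers of \eqref{eq:lsi}. There is therefore no proof in the paper against which to compare your attempt.

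What you have written is an honest research outline, not a proof. You correctly isolate the crux as a stability estimate of the form $C_{LS}(\nu)-1 \leq \Phi\bigl(I(\nu\|\gamma),D(\nu\|\gamma)\bigr)$ with $\Phi\to 0$, and you yourself flag this step as ``genuinely hard.'' It is, and none of the three routes you list is known to close it. Route (a) fails at the outset: Caffarelli's contraction theorem requires the target to be more log-concave than the Gaussian source, and there is no reason $\nu_n$ should be log-concave even for large $n$. Route (b) is a reasonable heuristic, but converting smallness of $I(\nu_n\|\gamma)$ into an $L^\infty$ bound on the F\"ollmer drift (which is what controls $C_{LS}$ via Lipschitz transport) is exactly the missing stability statement, not a consequence of it. Route (c) is where the gap is most concrete: the semigroup estimate $C_{LS}(T_t\nu)\leq e^{-2t}C_{LS}(\nu)+(1-e^{-2t})$ is indeed correct, but deconvolving even a tiny Gaussian can send $C_{LS}$ from finite to infinite, so ``careful control of what is lost when one deconvolves'' is not a technicality to be filled in later---it is the whole problem restated.

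Your parallel combinatorial idea, as you acknowledge, recovers only monotonicity; a strict form of Proposition~\ref{prop:conv-classical} when one factor is near-Gaussian would itself be a stability result of the same strength as what you are trying to prove. In summary, the proposal surveys the terrain accurately but does not supply the missing idea, and the conjecture remains open exactly as the paper states.
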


In addition to the conjectured asymptotic behavior, one may ask for the rate of convergence.  On this point, one might guess that the correct rate is
\begin{align}
C_{LS}(\nu_n) = 1 + O(1/n). \label{eq:conjRate}
\end{align}
Indeed, starting with the classical result of Rothaus~\cite{Rothaus1985AnalyticII}, and following with the estimates in \cite[Theorem 4.1]{Courtade2017ExistenceOS}, we have the lower bound\footnote{This bound quantitatively reinforces the interpretation of the quantity $C_{LS}(\nu_n)-1$ as a measure of the distance to Gaussianity.}
$$
C_{LS}(\nu_n)  \geq C_P(\nu_n) \geq 1 + \tfrac{1}{d} W_2^2(\nu_n,\gamma).
$$
Also by \cite[Theorem 4.1]{Courtade2017ExistenceOS}, we know that $W_2(\nu_n,\gamma)^2 \leq \tfrac{d}{n}(C_P(\mu) - 1)$, and that this rate of $O(1/n)$ is generally optimal (e.g., \cite[Theorem 1.1]{Rio2011AsymptoticCF}). Altogether, this tells us that we cannot hope to do better than \eqref{eq:conjRate}, but also suggests that rate $O(1/n)$ may be optimal.  If we assume that near-extremizers of the inequality (\ref{eq:lsi}) satisfy certain  regularity assumptions, we are able to show that $\lim_{n\to\infty} C_{LS}(\nu_n)=1$ and give a (presumably suboptimal) quantitative convergence rate, but the question remains open in general.

Finally, we remark that Theorems~\ref{thm:conv} and \ref{thm:monotone} parallel the known  monotonicity results for  Shannon entropy, Fisher information, and Poincar\'e constants \cite{Artstein2004OnTR, Artstein2004SolutionOS, courtade_bounds_2020}.  All of these  monotonicity results can be derived using  Shearer's inequality (see \cite[Remark 3.2]{courtade_bounds_2020}), which also plays a key role in our derivation of Theorem \ref{thm:conv}.  We remark that entropy and Fisher information both enjoy   associated central limit theorems \cite{Barron1986ENTROPYAT} and a rate of $O(1/n)$ is known to be optimal in these cases \cite{Johnson2001FisherII, Bobkov2011RateOC}.

\section{Proofs}
\label{sec:conv}

The proof of Theorem \ref{thm:conv} draws inspiration from the proof of \cite[Theorem 2.1]{courtade_bounds_2020}, with a key difference being that the argument remains at the level of entropy inequalities, instead of at the level of the corresponding variance inequalities obtained by linearization.
Let $\cX=\prod_{i=1}^n\cX_i$ where each $\cX_i$ is a Polish space.
For a subset $S\subset[n]$, define $\cX_S=\prod_{i\in S}\cX_i$.
Now we can define for any probability measure $P$ on $\cX$ the corresponding marginal $P_S$ on $\cX_S$ by $P_S = \pi_S\#P$, where $\pi_S:\cX\to\cX_S$ is the natural projection.
The main ingredient of the proof is Shearer's inequality:
\begin{proposition}[Shearer's inequality]
	\label{prop:shearer}
	Let $P\in\cP(\cX)$, let $Q_i\in\cP(\cX_i)$ for $i=1,\ldots,n$ and let $Q=\prod_{i=1}^nQ_i$.
	Then
	\begin{equation*}
		\sum_{S\subset [n]} c_S D(P_S\|Q_S) \leq D(P\|Q)
	\end{equation*}
	where $c_S \geq 0$ for all $S\subset [n]$ and $\sum_{S \ni i} c_S \leq 1$ for all $i\in[n]$.
\end{proposition}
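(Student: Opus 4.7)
The plan is to mimic the classical proof of Shearer's inequality for Shannon entropy, but with two key differences: we apply the chain rule to relative entropy, and we use that conditioning \emph{increases} (rather than decreases) relative entropy against a fixed reference. The product structure of $Q$ is what converts the Shearer-type sum over $S$ into a sum over single coordinates with the correct sign.

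First I would write the chain rule for relative entropy along the ordering $1,2,\ldots,n$: since $Q=\prod_i Q_i$ is product, the conditional distribution of $X_i$ given $X_{<i}$ under $Q$ is simply $Q_i$, so
\[
	D(P\|Q)=\sum_{i=1}^n \alpha_i, \qquad \alpha_i := \EE_{P_{<i}}\bigl[D\bigl(P_{X_i\mid X_{<i}}\,\bigl\|\,Q_i\bigr)\bigr],
\]
where $P_{<i}$ denotes the marginal of $P$ on $\cX_{[i-1]}$ and $P_{X_i\mid X_{<i}}$ is a regular conditional distribution (which exists since the $\cX_i$ are Polish). The same chain rule applied to the marginal $P_S$ with the induced order on $S$, combined with $Q_S=\prod_{i\in S}Q_i$, yields
\[
	D(P_S\|Q_S)=\sum_{i\in S}\alpha^S_i, \qquad \alpha^S_i := \EE_{P_{S\cap[i-1]}}\bigl[D\bigl(P_{X_i\mid X_{S\cap[i-1]}}\,\bigl\|\,Q_i\bigr)\bigr].
\]

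The crucial comparison is $\alpha^S_i\le \alpha_i$ for every $i\in S$. This follows from convexity of the relative entropy in its first argument: averaging $P_{X_i\mid X_{<i}}$ over the ``extra'' conditioning coordinates in $[i-1]\setminus S$ (weighted by the marginal of $P$) returns $P_{X_i\mid X_{S\cap[i-1]}}$, and then Jensen's inequality gives the desired bound since the reference $Q_i$ is unchanged (this is the standard ``conditioning increases relative entropy against a product reference'' fact).

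Putting these pieces together,
\[
	\sum_{S\subset[n]} c_S\, D(P_S\|Q_S)=\sum_S c_S\sum_{i\in S}\alpha^S_i \le \sum_i\alpha_i\sum_{S\ni i}c_S\le \sum_i\alpha_i = D(P\|Q),
\]
where the last inequality uses the hypothesis $\sum_{S\ni i}c_S\le 1$ together with $\alpha_i\ge 0$. The only subtle step is the comparison $\alpha^S_i\le\alpha_i$; beyond that, everything is bookkeeping. I do not anticipate a real obstacle, but care is needed to invoke existence of regular conditional probabilities (hence the Polish assumption) and to keep straight that the flipped direction of inequality compared to the Shannon-entropy version of Shearer stems precisely from convexity vs.\ concavity.
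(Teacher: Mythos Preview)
Your argument is correct: the chain rule for relative entropy against a product reference, together with the convexity-based comparison $\alpha_i^S\le\alpha_i$ and the covering condition $\sum_{S\ni i}c_S\le1$, is exactly the standard route to Shearer's inequality in this form. The paper itself omits the proof, merely noting that it is analogous to \cite[Theorem~3.1]{courtade_bounds_2020}, so there is nothing further to compare; your write-up would serve perfectly well as the missing proof.
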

We will omit the proof since it is analogous to the proof of \cite[Theorem 3.1]{courtade_bounds_2020}.
We will only need the case where $\cX_i=\R^d$ for every $i$.

For the proof of Theorem \ref{thm:conv}, it will be more convenient to work in the notation of random variables than with measures.  So, for a random variable $X$ and a non-negative measurable function $g: X\mapsto g(X)$, the entropy of $g(X)$ becomes 
$$
\Ent(g(X)) := \EE[ g(X) \log g(X) ]- \EE [g(X) ]\log \EE [g(X)]. 
$$
For a random variable $Y$  defined on the same probability space, define the conditional entropy
$$
\Ent(g(X)|Y) := \EE[ g(X) \log g(X) |Y]- \EE [g(X)|Y] \log \EE [g(X)|Y]. 
$$
Using definitions, we immediately obtain the following analogy to the classical variance decomposition:  
\begin{align}
\Ent(g(X)) = \EE[\Ent(g(X)|Y)] + \Ent(\EE[g(X)|Y])  . \label{eq:entDecomp}
\end{align}

\begin{proof}[Proof of Theorem \ref{thm:conv}]
	Let $Q=\mu_1\times\cdots\times\mu_n$ and let $Y=(Y_1,\ldots,Y_n)\sim Q$,  where $Y_i\sim\mu_i$ independently for all $i$.
	Define $U = \sum_{i=1}^nY_i$.
	For a subset $S\subset[n]$, let $Y_S=(Y_i)_{i\in S}$ and $\mu_S=\prod_{i\in S}\mu_i$.
	Also define the complement $\bar S = \{i\in[n]:i \notin S\}$.

    Fix any sufficiently smooth $f$ with $\EE[f^2(U)]=1$.  
	In the notation of Proposition \ref{prop:shearer}, we will define $P$ by
	\begin{equation*}
		\frac{dP}{dQ} = f^2(U).
	\end{equation*}
	Then
	\begin{equation*}
		\frac{dP_S}{dQ_S} = \EE[f^2(U)|Y_S]
	\end{equation*}
	and
	\begin{equation*}
		D(P_S\|Q_S) = \EE \left[ \frac{dP_S}{dQ_S}\log\frac{dP_S}{dQ_S} \right] = \Ent(\EE[f^2(U)|Y_S]),
	\end{equation*}
    where all expectations are with respect to $Q$. 
	We can therefore write Shearer's inequality as
	\begin{equation}
		\sum_{S \subset [n] } c_S \Ent(\EE[f^2(U)|Y_S]) \leq \Ent(f^2(U))
		\label{eq:shearer}
	\end{equation}
	for some non-negative $(c_S)_{S\subset [n]}$ satisfying $\sum_{S \ni i} c_S \leq 1$ for all $i$, to be specified later.
	Now we use the  entropy decomposition \eqref{eq:entDecomp} to write
	\begin{equation*}
		\Ent(f^2(U)) = \EE\left[ \Ent(f^2(U)|Y_S) \right] + \Ent\left( \EE[f^2(U)|Y_{S}] \right).
	\end{equation*}
	Summing over $S \subset [n]$ and applying equation (\ref{eq:shearer}) gives
	\begin{align*}
		\sum_{S\subset [n]} c_S \Ent(f^2(U)) &= \sum_{S\subset [n]} c_S \EE[\Ent(f^2(U)|Y_S)] + \sum_{S\subset [n]} c_S \Ent(\EE[f^2(U)|Y_{S}]) \\
		&\leq \sum_{S \subset [n]} c_S \EE[\Ent(f^2(U)|Y_S)] + \Ent(f^2(U)).
	\end{align*}
	Collecting all $\Ent(f^2(U))$ terms on the left, we may further bound 
	\begin{align}
		\left( \sum_{S\subset [n]} c_S - 1 \right) \Ent(f^2(U)) &\leq \sum_{S\subset [n]} c_S \EE[\Ent(f^2(U)|Y_S)] \nonumber \\
		&\leq 2 \sum_{S\subset [n]} c_S \EE\left[C_{LS}(\mu_{\bar S})\EE\left[|\nabla f(U)|^2\big|Y_{S}\right]\right] \nonumber \\
		&= 2 \sum_{S\subset [n]} c_S C_{LS}(\mu_{\bar S})\EE\left[|\nabla f(U)|^2\right], \label{eq:subadd-step}
	\end{align}
    where the second inequality follows by independence of $Y_S$ and $Y_{\bar{S}}$ and definition of  $C_{LS}(\mu_{\bar S})$.
    The above is homogeneous in $f^2$, so  our initial assumption $\EE[f^2(U)]=1$ may now be disposed of.

    We now choose the coefficients $(c_S)$.  Let $(r_{S})_{S\subset [n]}$ be those coefficients in the statement of the theorem satisfying   $r_{S}\geq0$ for all $S$,  and $\sum_{S\ni i} r_{S} \geq 1$ for all $i$.  If $\sum_{S \subset [n] } r_{S} = 1$, then this implies $r_{[n]}=1$ and $r_S=0$ for $S\neq [n]$, so the claimed result is trivial. Thus, we further assume that  $\sum_{S \subset [n] } r_{S} > 1$, and put $c_S = r_{\bar S}/(\sum_{S\subset[n]}r_{\bar S}-1)$.
    Then $c_S\geq0$ for all $S$,
    \begin{equation*}
        \sum_{S\ni i} c_S = \frac{\sum_{S\ni i} r_{\bar S}}{\sum_S r_{\bar S}-1} = \frac{\sum_S r_S - \sum_{S\ni i} r_{S}}{\sum_S r_{S}-1} \leq 1,
    \end{equation*}
    and
    \begin{equation*}
        \sum_{S \subset [n]} c_S = \frac{\sum_{S} r_{\bar S}}{\sum_S r_{\bar S}-1} > 1.
    \end{equation*}
    Note that $\sum_S r_{\bar S} - 1 = (\sum_S c_S - 1)^{-1}$, so we can invert the map to get $r_{\bar S} = c_S/(\sum_S c_S - 1)$.
    We may therefore substitute these coefficients $(c_S)$ into a rearranged equation (\ref{eq:subadd-step}) to conclude
    \begin{equation*}
        C_{LS}(\mu_{[n]}) \leq \sum_{S\subset[n]} \frac{c_S}{\sum_{S} c_S-1} C_{LS}(\mu_{\bar S}) = \sum_{S\subset[n]} r_{\bar S} C_{LS}(\mu_{\bar S}) = \sum_{S\subset[n]} r_S C_{LS}(\mu_S). \qedhere
    \end{equation*}
\end{proof}

With Theorem \ref{thm:conv} in hand, Theorem \ref{thm:monotone} follows.
\begin{proof}[Proof of Theorem \ref{thm:monotone}]
    Take $\mu_i=\mu$ for $i\in[n]$ and
    \begin{equation*}
        r_S=
        \begin{cases}
            \frac1{n-1} & S=\{[n]\setminus i\} \text{ for some } i\in[n] \\
            0 & \text{ otherwise.}
        \end{cases}
    \end{equation*}
    Theorem~\ref{thm:conv} then states
    \begin{equation*}
        C_{LS}(\mu_{[n]}) \leq \frac{n}{n-1} C_{LS}(\mu_{[n-1]}).
    \end{equation*}
    Applying Proposition~\ref{prop:scaling} gives the desired result.
\end{proof}

\bibliographystyle{plain}
\bibliography{functional_inequalities.bib}

\end{document}